\newcommand{\R}{\mathbb{R}}
\newcommand{\C}{\mathbb{C}}
\newcommand{\LL}{\mathcal{L}}
\newcommand{\po}{\partial}
\newcommand{\ve}{\varepsilon}
\newcommand{\X}{\times}
\renewcommand{\d}{\delta}
\renewcommand{\l}{\lambda}
\renewcommand{\a}{\alpha}
\newcommand{\s}{\sigma}
\newcommand{\z}{\zeta}
\renewcommand{\k}{\kappa}
\renewcommand{\div}{\text{\rm div}\,}
\newcommand{\cR}{{\mathcal R}}
\newcommand{\fkU}{{\frak U}}
\newcommand{\fkB}{{\frak B}}
\newcommand{\uu}{{\frak u}}
\newcommand{\fka}{{\frak a}}
\newcommand{\fkb}{{\frak b}}
\newcommand{\rhore}{{\rho_{(re)}}}
\newcommand{\ure}{{u_{(re)}}}
\newcommand{\vre}{{\vartheta_{(re)}}}
\theoremstyle{plain}
\newtheorem{theorem}{Theorem}[section]
\theoremstyle{definition}
\theoremstyle{remark}
\newtheorem{remark}{Remark}[section]
\numberwithin{equation}{section}
\begin{document}

%Topmatter
\title[Relativistic short wave-long wave interactions]
{On short wave-long wave interactions\\ in the relativistic context: \\ Application to the Relativistic Euler Equations.}

\author{Jo\~ao Paulo Dias}
\thanks{J.P.\ Dias gratefully acknowledges the support from FCT (Funda\c c\~ao para a Ci\^encia  e a Tecnologia)
under the project UIDB/04561/2020}
\address{Departamento de Matem\'atica and CMAFcIO \\
Faculdade de Ci\^encias, Universidade de Lisboa\\
Campo Grande, Edif.\ C6, 1749-016, Lisboa, Portugal }
\email{jpdias@fc.ul.pt}

\author{Hermano Frid} 
\thanks{H.~Frid gratefully acknowledges the support from CNPq, 
through grant proc.\ 305097/2019-9, and FAPERJ, 
through grant proc.\ E-26/202.900-2017}

\address{Department of  Computation and Mathematics-DCM\\
FFCLRP--USP, Av. Bandeirantes, 3900 - Monte Alegre
Ribeir\~ao Preto - SP - CEP.\ 14040-901}
\email{hermano.frid@usp.br}

\date{}

\keywords{short wave-long wave interaction, Dirac equation, Thirring model, relativistic Euler equations}

\subjclass[2010]{35L65, 35Q41, 81Q05 }

\begin{abstract}  In this paper we introduce a model of relativistic short wave-long 
wave interaction where the short waves are described by the massless $1+3$-dimensional Thirring model of nonlinear Dirac equation and  the long waves are described by the $1+3$-dimensional  relativistic Euler equations. The interaction coupling terms are modeled by a potential
proportional to the relativistic specific volume in the Dirac equation and an external force proportional  to the square modulus of the Dirac wave function in the relativistic Euler equation.  An important feature of the model is that the Dirac equations are based on the Lagrangian coordinates of the relativistic fluid flow. In particular, an important contribution of this paper is a clear formulation of  the relativistic Lagrangian transformation. This is done by means of the introduction of natural auxiliary dependent variables, rendering the discussion totally similar to the non-relativistic case. As far as the authors know the definition of the Lagrangian transformation given in this paper  is new. Finally, we establish the short-time existence  and uniqueness of a smooth solution of the Cauchy problem for the regularized model. This follows through the symmetrization of the relativistic Euler equation introduced by Makino and Ukai (1995) and requires a slight extension of a well known theorem of T.~Kato (1975) on quasi-linear symmetric hyperbolic systems.

\end{abstract}

\maketitle 

\section{Introduction} \label{S:1}

We consider the short wave-long wave interaction for a relativistic fluid described by the relativistic Euler equations in $\R^3$. The latter, when no external forces are acting, is given by (see, e.g.,  \cite{MU1, MU2, PS, LFU})
\begin{equation}\label{e1.1n}
\begin{aligned}
&\po_t\left(\frac{\rho+\ve^2 p}{1-\ve^2|u|^2}-\ve^2 p\right)+\sum_{k=1}^3\po_{x_k}\left(\frac{\rho+\ve^2p}{1-\ve^2|u|^2}u_k\right)=0,\\
&\po_t\left(\frac{\rho+\ve^2p}{1-\ve^2|u|^2}u_j\right)+\sum_{k=1}^3\po_{x_k}\left(\frac{\rho+\ve^2p}{1-\ve^2|u|^2}u_j u_k +p\d_{jk}\right)=0,\qquad j=1,2,3.
\end{aligned}
\end{equation}
Here, $\rho$ is the mass density and $u=(u_1,u_2,u_3)$ is the velocity vector of the fluid, both 
functions of $(t,x)\in\R_+\X\R^3$. The parameter $\ve$  represents the {\em inverse of the light speed}, $\d_{jk}$ is the Kronecker symbol, so that the $d\X d$ identity matrix is $I_d:=(\d_{jk})_{j,k=1}^d$, and $p=p(\rho)$ is the pressure. The physical domain for the unknown $(\rho,u)$ is 
$$
\rho\ge0,\qquad |u|^2:=\sum_{j=1}^3 u_j^2<\ve^{-2},
$$
while the pressure $p=p(\rho)$ satisfies
$0\le p'(\rho)<\ve^{-2}$.

In the relativistic context, the short waves  are described by the Dirac equation proposed by Dirac (\cite{Di}) in search of compatibility between relativity and quantum theories.  On the other hand as a replacement for the nonlinear cubic Schr\"odiger equation there are different models of 
the nonlinear cubic Dirac equation (see, e.g., \cite{CG,Ba, CH,Th, Del, KM, Ca}).  
Here, as in \cite{DF},  we will be concerned with the Thirring model proposed by Thirring in \cite{Th} whose mathematical study has been considered in several papers (see, e.g., \cite{Del, KM, DF1, Ca}). 
More specifically, here we only consider the zero mass case. 

In \cite{DF} two examples of models of short wave-long wave interactions in the relativistic context were addressed. Namely,   the case of a one-dimensional  scalar conservation law in the relativistic context such as the one introduced LeFloch et al.\ in \cite{LFMO} (see also \cite{HW}), and the interaction with long waves described by the augmented Born-Infeld (ABI) 
equations in electromagnetism, introduced by Brenier in \cite{Br}. Both of these examples were in one spatial variable.

In this paper we are concerned first with establishing a model of relativistic short wave-long wave interaction where the short waves are described by a massless  $1+3$-dimensional extension  of the Thirring model of nonlinear Dirac equation, with an interaction term representing the potential of an external force. On the other hand, the long waves are described by the $1+3$-dimensional
relativistic Euler equations \eqref{e1.1n}, with the interaction term appearing as an external force on the right-hand side of the relativistic momentum equations. 
An important feature of the model is that the Dirac equations are based on the Lagrangian coordinates of the relativistic fluid flow. In particular, an important contribution of this paper is a clear formulation of  the relativistic Lagrangian transformation. This is done by means of the introduction of natural auxiliary dependent variables, rendering the discussion totally similar to the non-relativistic case. As far as the authors know the definition of the relativistic Lagrangian transformation given in this paper  is new.
 We refer to \eqref{e1.123}, or its regularized form, \eqref{e1.123r}, for a view of the final model being proposed here. Then, we establish the local  in time existence and uniqueness of a smooth solution for the proposed regularized model \eqref{e1.123r}.
 This follows through the symmetrization introduced by Makino and Ukai  in \cite{MU2} and  requires a slight extension of the well known theorem of Kato in \cite{K} on quasi-linear symmetric hyperbolic systems.

 The rest of this paper is organized as follows. In Section~\ref{S:2} we discuss the Lagrangian transformation of the relativistic fluid flow. In Section~\ref{S:3} we recall the main property of the massless Thirring model obtained in \cite{DF1}. In Section~\ref{S:4} we establish our model for the relativistic interaction of short waves described by $1+3$-dimensional Thirring model of nonlinear  Dirac equation and long waves governed by the $1+3$-dimensional relativistic Euler equations, and its regularized version. In Section~\ref{S:5} we establish the local in time existence and uniqueness of a  smooth solution of the Cauchy problem for the regularized model.

\section{The Lagrangian transformation for the relativistic Euler equations}\label{S:2}

We are going to obtain first the Lagrangian transformation for \eqref{e1.1n}. For that it will be useful to introduce  the following auxiliary variables
\begin{equation}\label{e1.2n}
\begin{cases}
\rhore:=\dfrac{\rho+\ve^4|u|^2 p}{1-\ve^2|u|^2}, \quad &\text{(relativistic density)}\\
\ure:=\dfrac{\rho+\ve^2 p}{\rho+\ve^4|u|^2 p}u,\quad  &\text{(relativistic velocity)}\\
\tilde P_{jk}:=\dfrac{\rhore\ve^2(\ve^2|u|^2-1)}{\rho+\ve^2 p}\ure_j\ure_k, \quad &\text{(relativistic pressure loss tensor)}\\
& j,k=1,2,3.
\end{cases}
\end{equation}
Observing that $\rhore\ure= \frac{\rho+\ve^2p}{1-\ve^2|u|^2}u$, we may write \eqref{e1.1n} in the following illuminating form
\begin{equation}\label{e1.3n}
\begin{aligned}
&\po_t\rhore+\div (\rhore\ure)=0,\\
&\po_t(\rhore\ure)+\div(\rhore \ure\otimes\ure+ \tilde P+pI_3)=
\rhore F,
\end{aligned}
\end{equation}
but here we also allow for the action of an external force $F$. 
As far as the authors know, the above form of the relativistic Euler equations is presented in this paper for the first time, and it is very useful, in particular, in obtaining the Lagrangian transformation for the relativistic Euler equations.  Concerning the latter, due to the very similar structure of \eqref{e1.3n} with the non-relativistic Euler equations, we basically repeat the same procedures as for the non-relativistic case (see, e.g., \cite{DF, FPZ}). So, let us consider the flow $\Phi(t;x)$ given by the differential equation
\begin{equation}\label{e1.4n}
\begin{cases}
\dfrac{d \Phi}{dt}(t;x)&=\ure(t,\Phi(t;x))\\
\Phi(0;x)&=x.
\end{cases}
\end{equation}
As in the non-relativistic case, we consider the Jacobian matrix 
$$
J_{\Phi}(t;x):=\det\left(\dfrac{\po \Phi}{\po x}(t;x)\right),
$$
for which we have the well known Euler identity for the Jacobian, (see, e.g., \cite{Bat}):  
\begin{equation}\label{e1.5n}
\begin{cases}
\dfrac{d J_{\Phi}(t;x)}{dt}=\div \ure(t,\Phi(t;x))\,J_{\Phi}(t;x),\\
J_{\Phi}(0;x)=1.
\end{cases}
\end{equation}
The {\em Lagrangian transformation} 
\begin{equation}\label{e1.6n0}
Y(t,x)=(t,y(t,x))
\end{equation}  
is defined by
\begin{equation}\label{e1.6n}
y(t, \Phi(t;x))=y_0(x),
\end{equation}
where $y_0:\R^3\to\R^3$ is any conveniently chosen diffeomorphic transformation. We choose
\begin{equation}\label{e1.7n}
y_0(x):=(x_1,x_2,\int_0^{x_3} \rhore(0,x_1,x_2,\s)\,d\s).
\end{equation}
From the relations \eqref{e1.5n}, \eqref{e1.6n} and \eqref{e1.7n} it follows that 
$$
J_y(t):=\det\left(\dfrac{\po y}{\po x}(t,\Phi(t;x))\right)
$$
satisfies
\begin{equation}\label{e1.8n}
\begin{cases}
\dfrac{dJ_y(t)}{dt}=-\div \ure(t,\Phi(t;x))\,J_y(t),\\
J_y(0)=\rhore(0,x).
\end{cases}
\end{equation}
Hence, again as in the non-relativistic case, we deduce
\begin{equation}\label{e1.9n}
\begin{aligned}
&\frac{d}{dt}\frac{\rhore(t,\Phi(t;x))}{J_y(t)} \\
&\qquad =\frac{\po_t\rhore(t,\Phi(t;x))+\ure(t,\Phi(t;x))\cdot\nabla\rhore(t,\Phi(t; x))J_y(t) 
 -J_y'(t)\rhore(t,\Phi(t;x))}{J_y^2(t)}\\
 &\qquad=\frac{-\div \ure(t,\Phi(t;x))\rhore(t, \Phi(t;x))J_y(t)+\div\ure(t,\Phi(t;x))J_y(t)\rhore(t,\Phi(t;x))}{J_y^2(t)}\\
 &\qquad=0.
 \end{aligned}
 \end{equation}
 Since $J_y(0)=\rhore(0,x)$, we conclude that 
 \begin{multline}\label{e1.10n}
 \det\left(\frac{\po y}{\po z}(t,\Phi(t;x))\right)=J_y(t)=\rhore(t,\Phi(t,x)), \quad\text{and so,}\\
 \det\left(\frac{\po y}{\po z}(t,z)\right)=\rhore(t,z)\quad \text{for all $(t,z)\in[0,\infty)\X\R^3$.}
 \end{multline}
 In particular, we see that as long as the relativistic density $\rhore(t,x)$ is positive, the Lagrangian transformation is non-singular.

\section{Main Property of the Massless Thirring Model}\label{S:3}
Concerning the $1+3$-dimensional case, short waves are described by an equation of the form 
\begin{equation}\label{e1.105}
\uu_t-\fka_1 \uu_{y_1} -\fka_2 \uu_{y_2}-\fka_3 \uu_{y_3}=-i {\mathfrak B}(t,y) \uu,
\end{equation}
where $\uu=\uu(t,y) \in\C^4$, $y\in\R^3$, $\fka_i$, $i=1,2,3$, are $4\X4$ complex matrices satisfying 
$\fka_i^*=\fka_i$, $\fka_i^2=I$, $\fka_i\fka_j=-\fka_j\fka_i$, $i\ne j$, $i,j=1,2,3$,
and  ${\mathfrak B}(t,y)$ is a $4\X4$ complex matrix such that, if ${\mathfrak B}^*$ denotes its adjoint matrix,  
${\mathfrak B}^*={\mathfrak B}$,  and $\fka_i {\mathfrak B}={\mathfrak B}\fka_i$, $i=1,2,3$. In the $1+3$-dimensional extension of the Thirring 
massless model 
\begin{equation}\label{e1.105'}
{\mathfrak B}(t,y)=\l {\mathfrak U}+V(t,y)I_4
\end{equation} 
with $\l$ a real constant, ${\mathfrak U}$ is the $4\X4$ quadratic Thirring matrix defined as 
$$
{\mathfrak U}:=\uu^\dag\uu I_4-\uu^\dag\fkb\uu\fkb
$$  
where
$$
\fkb:=i\fka_1\fka_2\fka_3,
$$
and $V$ is a real-valued function representing the potential of an external force.  The symbol ${}^\dag$ for column vectors in $\C^4$ means the conjugate transpose, i.e., if 
 $$
 \uu=\left(\begin{matrix}u_1\\ u_2\\ u_3\\u_4\end{matrix}\right)\in\C^4,
 $$ 
 then $\uu^\dag=(\bar u_1,\bar u_2,\bar u_3, \bar u_4)$, in particular, $\uu^\dag\uu=|\uu|^2=\sum_{i=1}^4\left((\Re u_i)^2+(\Im u_i)^2\right)$.
 
 We recall now the following result established in \cite{DF1}, which extended the previous analogue in \cite{DFi}, and constitutes the main property of the massless Thirring model. We reproduce its proof here for the convenience of the reader. 

\begin{theorem}\label{T:1.101} Let $\uu$ be a smooth solution of \eqref{e1.105} and let
$\fka_i$, $i=1,2,3$, satisfy the above properties. Then, both  $w=|\uu|^2$ and $w=\uu^\dag\fkb\uu$ satisfy
\begin{equation}\label{e1.89}
w_{tt}-w_{y_1y_1}-w_{y_2y_2}-w_{y_3y_3}=0.
\end{equation}
\end{theorem}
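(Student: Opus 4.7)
My plan is a direct algebraic computation organized in three stages: (i) derive a Dirac-type conservation identity for $w_t$, (ii) differentiate once more in time and substitute the equation to eliminate all time-derivatives in favor of spatial ones, (iii) exploit the Clifford relations to identify the diagonal part with $\Delta w$ and show the off-diagonal part vanishes by a symmetry/antisymmetry pairing.

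For stage (i), since $\fka_j^*=\fka_j$ and $\mathfrak{B}^*=\mathfrak{B}$, the adjoint of \eqref{e1.105} reads $\uu^\dag_t=\sum_j\uu^\dag_{y_j}\fka_j+i\uu^\dag\mathfrak{B}$. Multiplying by $\uu$ on the right and adding $\uu^\dag\uu_t$, the $\pm i\uu^\dag\mathfrak{B}\uu$ contributions cancel and I obtain
\[
w_t=\sum_{j=1}^3\partial_{y_j}(\uu^\dag\fka_j\uu).
\]
For stage (ii), commuting $\partial_t$ past the spatial divergence, the same substitution yields
\[
\partial_t(\uu^\dag\fka_j\uu)=\sum_k\bigl(\uu^\dag_{y_k}\fka_k\fka_j\uu+\uu^\dag\fka_j\fka_k\uu_{y_k}\bigr)+i\uu^\dag[\mathfrak{B},\fka_j]\uu,
\]
and the commutator vanishes because $\fka_j\mathfrak{B}=\mathfrak{B}\fka_j$. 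Splitting the $k$-sum at $k=j$ (which contributes $w_{y_j}$ via $\fka_j^2=I$) and $k\ne j$ (which, using $\fka_k\fka_j=-\fka_j\fka_k$, becomes $-\uu^\dag_{y_k}\fka_j\fka_k\uu+\uu^\dag\fka_j\fka_k\uu_{y_k}$), I arrive at an expression for $\partial_t(\uu^\dag\fka_j\uu)$ involving only spatial derivatives.

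For stage (iii), I apply $\partial_{y_j}$ and sum. The diagonal part immediately produces $\sum_j w_{y_j y_j}=\Delta w$. The off-diagonal sum expands into four types of bilinears in $\uu^\dag$, $\uu$ and their first/second spatial derivatives, each dressed with $\fka_j\fka_k$ for $j\ne k$. The two terms carrying a mixed second derivative $\uu^\dag_{y_jy_k}\fka_j\fka_k\uu$ and $\uu^\dag\fka_j\fka_k\uu_{y_jy_k}$ vanish upon summing over ordered pairs $(j,k)$ because the mixed partials are symmetric in $j,k$ while $\fka_j\fka_k$ is antisymmetric. The two mixed-gradient terms $-\uu^\dag_{y_k}\fka_j\fka_k\uu_{y_j}$ and $\uu^\dag_{y_j}\fka_j\fka_k\uu_{y_k}$ are handled by the $(j,k)\leftrightarrow(k,j)$ symmetrization combined with the Hermitian conjugation of scalar bilinear forms, again using $(\fka_j\fka_k)^*=-\fka_j\fka_k$. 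This yields $w_{tt}=\Delta w$.

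For the second identity $w=\uu^\dag\fkb\uu$, I would run the identical argument after noting three algebraic facts that follow purely from $\fkb=i\fka_1\fka_2\fka_3$ and the Clifford relations: $\fkb^*=\fkb$ (so $w$ is real), $\fkb\fka_i=\fka_i\fkb$ for each $i$, and consequently $\fkb\mathfrak{B}=\mathfrak{B}\fkb$. The conservation law becomes $w_t=\sum_j\partial_{y_j}(\uu^\dag\fkb\fka_j\uu)$, and because $\fkb$ commutes with every $\fka_i$ and with $\mathfrak{B}$, the subsequent manipulation is parallel verbatim. The main obstacle I anticipate is bookkeeping the off-diagonal cross terms in stage (iii): this is where the entire content of the theorem is concentrated, so the work is in clean index manipulation together with careful use of the Hermitian/antihermitian character of $\fka_j\fka_k$ for $j\ne k$, rather than in any new analytical idea.
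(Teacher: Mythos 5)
Your stages (i) and (ii) are correct and coincide with the paper's derivation of \eqref{e1.106'}: multiplying by $\uu^\dag$ and adding the adjoint gives $w_t=\sum_j\partial_{y_j}(\uu^\dag\fka_j\uu)$, and the commutator $[\fkB,\fka_j]$ indeed drops out in the next step. In stage (iii) the mixed second-derivative terms do cancel, as you say, by the symmetry of $\uu_{y_jy_k}$ against the antisymmetry of $\fka_j\fka_k$. But the mixed-gradient terms do not. After substituting the equation, the off-diagonal contribution to $\partial_t(\uu^\dag\fka_j\uu)$ is $\sum_{k\ne j}\bigl(\uu^\dag\fka_j\fka_k\uu_{y_k}-\uu_{y_k}^\dag\fka_j\fka_k\uu\bigr)$, and since $\fka_j\fka_k$ is \emph{anti}-Hermitian the two summands are complex conjugates of each other with the \emph{same} sign, so this equals $2\sum_{k\ne j}\Re\bigl(\uu^\dag\fka_j\fka_k\uu_{y_k}\bigr)$. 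Likewise, after applying $\partial_{y_j}$ and summing, the $(j,k)\leftrightarrow(k,j)$ relabeling you invoke makes the two gradient-gradient families reinforce rather than cancel, leaving the residual $2\sum_{j\ne k}\uu_{y_j}^\dag\fka_j\fka_k\uu_{y_k}=2\bigl(|\sum_j\fka_j\uu_{y_j}|^2-|\nabla\uu|^2\bigr)$, which is real but generically nonzero. For instance, with $\fkB=0$ the superposition of two plane waves $e^{i(k\cdot y-|k|t)}\phi+e^{i(k'\cdot y-|k'|t)}\phi'$ with $|k|=|k'|$, $k\ne k'$ and $(\phi')^\dag\phi\ne0$ has $|\uu|^2$ time-independent but not harmonic in $y$. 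So the cancellation on which your stage (iii) rests is the one step that cannot be carried out, and it is exactly where you yourself observe the whole content of the statement is concentrated.

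You should know that the paper's own proof is silent at precisely the same point: it passes from the pair of equations obtained by multiplying by $\uu^\dag\fka_1$ and taking adjoints directly to \eqref{e1.107}, thereby discarding $-\uu^\dag\fka_1\fka_2\uu_{y_2}-\uu_{y_2}^\dag\fka_2\fka_1\uu=-2\Re(\uu^\dag\fka_1\fka_2\uu_{y_2})$ and the analogous $\fka_1\fka_3$ term without comment. In the original $1+1$-dimensional Thirring setting there is a single matrix $\fka_1$, no such cross terms arise, and the argument closes; in $1+3$ dimensions your more explicit bookkeeping has correctly located the obstruction, but the proposed symmetrization does not remove it, and no purely algebraic rearrangement will, since the residual is nonzero on explicit solutions of the free equation.
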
 

\begin{proof} Multiplying \eqref{e1.105} by $\uu^\dag $ to the left
\begin{equation*}
\uu^\dag \uu_t-\uu^\dag \fka_1 \uu_{y_1}-\uu^\dag \fka_2 \uu_{y_2}-\uu^\dag\fka_3 \uu_{y_3}=-i\uu^\dag \fkB(t,y)\uu,
\end{equation*}
 applying ${}^\dag$ to the last equation
 $$
   \uu_t^\dag \uu - \uu_{y_1}^\dag\fka_1 \uu- \uu_{y_2}^\dag\fka_2 \uu - \uu_{y_3}^\dag\fka_3 \uu=i\uu^\dag \fkB(t,y) \uu,
 $$
 adding the last two gives
 \begin{equation}\label{e1.106'}
 (|\uu|^2)_t-(\uu^\dag\fka_1\uu)_{y_1}-(\uu^\dag\fka_2 \uu)_{y_2}-(\uu^\dag\fka_3 \uu)_{y_3}=0.
\end{equation}
Deriving the last equation by $t$, it follows
\begin{equation}\label{e1.106}
(|\uu|^2)_{tt}-(\uu^\dag\fka_1 \uu)_{ty_1}-(\uu^\dag\fka_2 \uu)_{ty_2}-(\uu^\dag\fka_3 \uu)_{ty_3}=0.
\end{equation}
Similarly, multiplying \eqref{e1.105}  by $u^\dag \fka_1$,  it follows,
\begin{equation*}
\uu^\dag\fka_1 \uu_t-\uu^\dag \uu_{y_1}-\uu^\dag \fka_1\fka_2 \uu_{y_2}-\uu^\dag\fka_1\fka_3 \uu_{y_3}=
-iu^\dag \fkB(t,y)\fka_1 u,
\end{equation*}  
applying ${}^\dag$ to the last equation
\begin{equation*}
 \uu_t^\dag \fka_1 \uu-\uu_{y_1}^\dag \uu- \uu_{y_2}^\dag\fka_2\fka_1  \uu-\uu_{y_3}^\dag\fka_3\fka_1  \uu=i\uu^\dag \fkB(t,y)\fka_1\uu,
\end{equation*}  
adding the last two gives
\begin{equation}\label{e1.107}
(\uu^\dag \fka_1 \uu)_t-(|\uu|^2)_{y_1}=0.
\end{equation}
 Similarly, we get
 \begin{equation}\label{e1.108}
(\uu^\dag \fka_2 \uu)_t-(|\uu|^2)_{y_2}=0,
 \end{equation}
and
\begin{equation}\label{e1.109}
(\uu^\dag \fka_3 \uu)_t-(|\uu|^2)_{y_3}=0.
\end{equation}
Deriving \eqref{e1.107} by $y_1$, \eqref{e1.108} by $y_2$ and \eqref{e1.109} by $y_3$ there follow, respectively, 
\begin{equation}\label{e1.110}
(\uu^\dag \fka_1 \uu)_{y_1t}-(|\uu|^2)_{y_1y_1}=0,
\end{equation}
\begin{equation}\label{e1.111}
(\uu^\dag \fka_2 \uu)_{y_2t}-(|\uu|^2)_{y_2y_2}=0,
\end{equation}
\begin{equation}\label{e1.112}
(\uu^\dag \fka_3 \uu)_{y_3t}-(|\uu|^2)_{y_3y_3}=0.
\end{equation}
Adding \eqref{e1.106}, \eqref{e1.110}, \eqref{e1.111} and \eqref{e1.112}, it follows
\begin{equation}\label{e1.113}
( |\uu|^2)_{tt}-(|\uu|^2)_{y_1y_1}-(|\uu|^2)_{y_2y_2}-(|\uu|^2)_{y_3y_3}=0,
\end{equation}
which proves the assertion for $w=|\uu|^2$. 

To prove the assertion for $w=\uu^\dag\fkb\uu$, we first multiply \eqref{e1.105} by $\uu^\dag\fkb$ on the left to obtain\footnote{Unfortunately, in \cite{DF1}, from the next equation up to the final one of the proof of this assertion, the signs of the terms with spatial derivatives appear as $+$ instead of $-$, due to a misprint.}
\begin{equation}\label{e1.114}
\uu^\dag\fkb\uu_t-\uu^\dag\fkb\fka_1\uu_{y_1}-\uu^\dag\fkb\fka_2\uu_{y_2}-\uu^\dag\fkb\fka_3\uu_{y_3}=-i\uu^\dag \fkB\fkb \uu.
\end{equation}
 We then apply ${}^\dag$ to \eqref{e1.114} and add the resulting equation to \eqref{e1.114} to obtain
\begin{equation}\label{e1.115}
(\uu^\dag\fkb\uu)_t-(\uu^\dag\fkb\fka_1\uu)_{y_1}-(\uu^\dag\fkb\fka_2\uu)_{y_2}-(\uu^\dag\fkb\fka_3\uu)_{y_3}=0.
\end{equation}
Deriving \eqref{e1.115} by $t$ we obtain
\begin{equation}\label{e1.115'}
(\uu^\dag\fkb\uu)_{tt}-(\uu^\dag\fkb\fka_1\uu)_{ty_1}-(\uu^\dag\fkb\fka_2\uu)_{ty_2}-(\uu^\dag\fkb\fka_3\uu)_{ty_3}=0.
\end{equation}
Now we multiply \eqref{e1.105} by $\uu^\dag\fkb\fka_1$ to get
\begin{equation}\label{e1.116}
\uu^\dag\fkb\fka_1\uu_t-\uu^\dag\fkb\uu_{y_1}-\uu^\dag\fkb\fka_1\fka_2\uu_{y_2}-\uu^\dag\fkb\fka_1\fka_3\uu_{y_3}=-i\uu^\dag \fkB\fkb\fka_1 \uu.
\end{equation}
We then apply ${}^\dag$ to \eqref{e1.116} and add the resulting equation to \eqref{e1.116}
to obtain
 \begin{equation*}
(\uu^\dag\fkb\fka_1\uu)_t-(\uu^\dag\fkb\uu)_{y_1}=0,
\end{equation*} 
which deriving with respect to $y_1$ gives
  \begin{equation}\label{e1.117}
(\uu^\dag\fkb\fka_1\uu)_{ty_1}-(\uu^\dag\fkb\uu)_{y_1y_1}=0.
\end{equation} 
Similarly, we obtain 
   \begin{equation}\label{e1.118}
(\uu^\dag\fkb\fka_2\uu)_{ty_2}-(\uu^\dag\fkb\uu)_{y_2y_2}=0,
\end{equation} 
and
  \begin{equation}\label{e1.119}
(\uu^\dag\fkb\fka_3\uu)_{ty_3}-(\uu^\dag\fkb\uu)_{y_3y_3}=0.
\end{equation} 
Adding \eqref{e1.115'}, \eqref{e1.117}, \eqref{e1.118} and \eqref{e1.119}  we then obtain \eqref{e1.89} for $w=\uu^\dag\fkb\uu$, which concludes the proof. 
\end{proof}

\begin{remark}\label{R:1} Observe that from  \eqref{e1.106'} at $t=0$ we obtain
\begin{equation}\label{e1.106''}
 (|\uu|^2)_t|_{t=0}=\left(\uu^\dag\fka_1\uu)_{y_1}+(\uu^\dag\fka_2 \uu)_{y_2}+(\uu^\dag\fka_3 \uu)_{y_3}\right)|_{t=0}.
\end{equation}
Similarly, from \eqref{e1.115} at $t=0$ we obtain
\begin{equation}\label{e1.115''}
(\uu^\dag\fkb\uu)_t|_{t=0}=\left((\uu^\dag\fkb\fka_1\uu)_{y_1}+(\uu^\dag\fkb\fka_2\uu)_{y_2}+(\uu^\dag\fkb\fka_3\uu)_{y_3}\right)|_{t=0}.
\end{equation}
The right-hand sides of both \eqref{e1.106''} and \eqref{e1.115''} are known from $\uu(0,x)$.
\end{remark}

\section{The model for the SW-LW interaction with relativistic fluid flows}\label{S:4}

We propose the following simplified model for the interaction of short waves described by the massless Thirring model \eqref{e1.105} and long waves governed by
the relativistic Euler equations \eqref{e1.3n}, with
\begin{align}
V(t,y)&:= \k  \vre(t,y), \quad \vre(t,y):= \frac1{\rhore\circ Y^{-1}(t,y)}, \label{e1.120}\\
F(t,x) &:=  \frac\a{\rhore (t,x)}\nabla_x\left(|\uu\circ Y(t,x)|^2\right), \label{e1.121}
\end{align}
where $Y^{-1}(t,y)=(t,x(t,y))$ is the inverse mapping of the Lagrangian transformation, $\k, \a$ are given positive constants. $\vre(t,y)$ is called the relativistic specific volume.

We then arrive at the following simplified model for the relativistic 
short wave-long wave interaction for the relativistic Euler equations
\begin{equation}\label{e1.122}
\begin{aligned}
& \po_t\uu-\fka_1 \po_{y_1}\uu -\fka_2 \po_{y_2}\uu-\fka_3 \po_{y_3}\uu=
-i \left(\l \fkU(t,y)+\k \vre(t,y)I_4\right)\uu,\\
&\po_t\rhore+\nabla_x\cdot (\rhore\ure)=0,\\
&\po_t(\rhore\ure)+\nabla_x\cdot(\rhore \ure\otimes\ure+ \tilde P+pI_3)=
 \a \nabla_x\left(|\uu\circ Y(t,x)|^2\right),
\end{aligned}
\end{equation}
where $(t,y)$ in the first equation are the Lagrangian coordinates of the fluid defined in \eqref{e1.6n0} and \eqref{e1.6n}. It is also useful to have the above system written in the original dependent variables:
\begin{equation}\label{e1.123}
\begin{aligned}
& \po_t\uu-\fka_1 \po_{y_1}\uu -\fka_2 \po_{y_2}\uu-\fka_3 \po_{y_3}\uu=
-i \left(\l \fkU(t,y)+\k \vre(t,y)I_4\right)\uu,\\
&\po_t\left(\frac{\rho+\ve^2 p}{1-\ve^2|u|^2}-\ve^2 p\right)+\sum_{k=1}^3\po_{x_k}\left(\frac{\rho+\ve^2p}{1-\ve^2|u|^2}u_k\right)=0,\\
&\po_t\left(\frac{\rho+\ve^2p}{1-\ve^2|u|^2}u_j\right)+\sum_{k=1}^3\po_{x_k}\left(\frac{\rho+\ve^2p}{1-\ve^2|u|^2}u_j u_k +p\d_{jk}\right)= \a \po_{x_j}\left(|\uu\circ Y(t,x)|^2\right) ,\\
&\qquad\qquad\qquad\qquad\qquad\qquad\qquad\qquad\qquad\qquad\qquad\qquad j=1,2,3.
\end{aligned}
\end{equation}

 \section{Short time smooth solutions for a regularized model}\label{S:5}
 
 The short time existence of smooth solutions to the Cauchy problem for the relativistic Euler equations  in several space variables \eqref{e1.1n} was addressed in a number of papers starting with \cite{MU1, MU2} and then \cite{PS,LFU}, among others. We will consider particularly here the article \cite{MU2}. The latter establishes a symmetrization of \eqref{e1.1n} based on the determination of a strictly convex entropy
 $\eta$ for \eqref{e1.1n} through an application of a theorem by Godunov in \cite{Go}.  The referred theorem in \cite{Go} establishes that, for a hyperbolic system in the dependent variable $w\in\R^m$, endowed with a strictly convex entropy $\eta$, the change of dependent variables $w\mapsto v=\nabla_w\eta(w)$, transforms the system in a hyperbolic symmetric system to which the local existence theory of a local in time smooth solution in \cite{K} applies.   
 
 Unfortunately, the system \eqref{e1.123} as it is has a serious imbalance. Indeed, from the analysis of the deformation gradient in, e.g.,  \cite{FPZ} (see also \cite{FMN})  we know that  the regularity of the gradient on right-hand side is compatible with that of  
 $\nabla_x\ure$. Since the equation itself only involves the derivatives of $\ure$ up to the first order this leaves us with a very challenging problem. This is different from the situation in, for instance, \cite{FPZ} and \cite{FMN}, where the long waves are described by the Navier-Stokes equations which is a parabolic system, not a first order hyperbolic system as in the present situation.  The situation here is also different from the one in \cite{DF}, where  one-dimensional models are considered that are implicitly or explicitly written in Lagrangian coordinates, in which case we can take the write-hand side as part of the flux function of the hyperbolic conservation law. 

As a way to repare the regularity imbalance commented above, here as a provisory solution to this problem, we consider the following regularization of \eqref{e1.123}:
\begin{equation}\label{e1.123r}
\begin{aligned}
& \po_t\uu-\fka_1 \po_{y_1}\uu -\fka_2 \po_{y_2}\uu-\fka_3 \po_{y_3}\uu=
-i \left(\l \fkU(t,y)+\k \vre(t,y)I_4\right)\uu,\\
&\po_t\left(\frac{\rho+\ve^2 p}{1-\ve^2|u|^2}-\ve^2 p\right)+\sum_{k=1}^3\po_{x_k}\left(\frac{\rho+\ve^2p}{1-\ve^2|u|^2}u_k\right)=0,\\
&\po_t\left(\frac{\rho+\ve^2p}{1-\ve^2|u|^2}u_j\right)+\sum_{k=1}^3\po_{x_k}\left(\frac{\rho+\ve^2p}{1-\ve^2|u|^2}u_j u_k +p\d_{jk}\right)= \a \po_{x_j}\left(\z_\d*|\uu\circ Y(t,x)|^2\right) ,\\
&\qquad\qquad\qquad\qquad\qquad\qquad\qquad\qquad\qquad\qquad\qquad\qquad j=1,2,3,
\end{aligned}
\end{equation}
 where we regularize the right-hand side of \eqref{e1.123} with a mollifier   
 $\z_\d(x)=\d^{-3}\z(\d^{-1}x)$, for some $\z\in C_c^\infty(\R^3)$, $\z\ge0$, and $\int_{\R^3}\z(x)\,dx=1$, for $\d>0$. 
 
  Let us pose the following initial conditions for \eqref{e1.123r}:
\begin{equation}\label{e1.124}
\begin{cases}
\uu|_{t=0}=\uu_0(x),\\
\rho|_{t=0}=\rho_0(x),\\
u_i|_{t=0}=u_{0i}(x), \quad i=1,2,3.
\end{cases}
\end{equation}

We adopt assumptions for the relativistic Euler equations as in \cite{MU2}. Let $0\le \rho_*<\rho^*\le \infty$. As in \cite{MU2}, we assume
\begin{equation}\label{e1.125}
\begin{aligned}
& p(\rho)\in C^\infty (\rho_*,\rho^*),\\
& p(\rho)>0,\quad 0<p'(\rho)<c^2 \quad \text{for $\rho\in (\rho_*,\rho^*)$}. 
\end{aligned}
\end{equation}

We have the following local in time existence and uniqueness of smooth solution for \eqref{e1.123r}-\eqref{e1.124}. Here, for simplicity, we avoid the use of uniformly local Sobolev spaces as in \cite{K} and \cite{MU2}.

 \begin{theorem} \label{T:main} Assume \eqref{e1.125} for $p$. Suppose the initial data $\uu_0$, 
  and $(u_{01}, u_{02}, u_{03})$ belong to $H^s(\R^3)$ and $\rho_0-\rho_\infty\in H^s(\R^3)$, for some $\rho_\infty\in(\rho_*,\rho^*)$, $s>\frac52$, and that there exist a positive constant $\d$ sufficiently small so that 
 \begin{gather}
 \rho_*+\d\le \rho_0(x)\le \rho^*-\d,\\
 |u_0|^2(x)=u_{01}^2(x)+u_{02}^2(x)+u_{03}^2(x)\le (1-\d)c^2,
 \end{gather}
 for all $x\in\R^3$. Then, the Cauchy problem \eqref{e1.123r}-\eqref{e1.124} has a unique solution
 \begin{equation}\label{e1.126}
 (\uu,\rho-\rho_\infty,u)\in L^\infty(0,T; H^s)\cap C([0,T];H^{s-1}),
 \end{equation}
 with $\rho_*<\rho(t,x)<\rho^*$ and $|u|^2(t,x)<c^2$. Here $T>0$ depends only on $\d$ and the $H^s$-norm of $(\uu_0,\rho_0-\rho_\infty,u_0)$.
 \end{theorem}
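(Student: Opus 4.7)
The plan is to recast \eqref{e1.123r} as a coupled first-order symmetric hyperbolic system for a combined unknown $W=(\uu,w)$, where $w$ denotes the Makino--Ukai symmetrizing variables for the relativistic Euler block, and then to carry out a Picard-type iteration in the spirit of \cite{K}. The Dirac block is already in symmetric hyperbolic form because $\fka_i^*=\fka_i$; following \cite{MU2}, the strictly convex entropy for \eqref{e1.1n} furnished by Godunov's theorem lets one introduce variables in which the Euler block becomes symmetric hyperbolic. The coupled system then takes the schematic form
\begin{equation*}
A_0(W)\,\po_t W+\sum_{j=1}^3 A_j(W)\,\po_{x_j} W=G\bigl(t,x,W,W\circ Y\bigr),
\end{equation*}
where $Y=Y[\ure]$ is the Lagrangian transformation determined by $\ure$ through \eqref{e1.4n}--\eqref{e1.8n}.

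First I would quantify the dependence of $Y$ on $\ure$. Given $\ure\in L^\infty(0,T;H^s)$ with $s>\tfrac52$, Sobolev embedding yields $\ure\in C^1_{t,x}$, so \eqref{e1.4n} defines a $C^1$-diffeomorphism $\Phi(t;\cdot)$, and the Jacobian identity \eqref{e1.10n} shows $Y$ remains non-singular as long as $\rhore$ stays in the interval $[\rho_*+\d/2,\rho^*-\d/2]$. Standard composition estimates in $H^s$ then control $\uu\circ Y$ and $\vre\circ Y^{-1}$ in terms of $\|\uu\|_{H^s}$, $\|\ure\|_{H^s}$, and the density bounds. Crucially, the mollification $\z_\d*$ in \eqref{e1.123r} upgrades the source $\a\,\nabla_x\bigl(\z_\d*|\uu\circ Y|^2\bigr)$ in the momentum equations to a function that is $C^\infty$ in $x$ with norms controlled by $\|\uu\|_{L^2}$ and the fixed parameter $\d$; this decouples the Sobolev index of $\ure$ (entering through $Y$) from the first $x$-derivative of $\uu$ that would otherwise appear, resolving the regularity imbalance commented on after \eqref{e1.123}.

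I would then iterate: given $W^n$, define $Y^{n+1}:=Y[\ure^n]$ by integrating \eqref{e1.4n}, and solve the linear symmetric hyperbolic system with matrices $A_j(W^n)$ and source $G(\cdot,W^n,W^n\circ Y^{n+1})$ to obtain $W^{n+1}$. Energy estimates with Moser-type commutator bounds yield uniform $L^\infty_t H^s_x$ control on a short interval $[0,T]$ with $T$ depending only on $\d$ and the $H^s$-norm of the data, while continuity in time preserves the open constraints $\rho_*<\rho^n<\rho^*$ and $|u^n|^2<\ve^{-2}$. Contraction of consecutive iterates in the weaker norm $L^\infty_t L^2_x$ produces a limit $W\in L^\infty_t H^s_x\cap C_tH^{s-1}_x$, and an analogous $L^2$ stability estimate between two solutions supplies uniqueness.

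The main obstacle is the nonlocal, solution-dependent coupling through $Y$: the Lagrangian transformation is determined by the unknown $\ure$ via the ODE \eqref{e1.4n}, yet it enters the source $G$ both through $\uu\circ Y$ in the Euler block and through $\vre\circ Y^{-1}$ in the Dirac block. This is precisely the reason Kato's theorem needs the slight extension announced in the introduction: its standard statement requires the principal coefficients and source to be local functions of the unknown, whereas here the source is a composition with a flow driven by the unknown itself. The extra estimates required are of two kinds: Lipschitz dependence of the flow $\Phi$ and its inverse on the driving velocity in the topology $H^s\to H^{s-1}$, obtained by differentiating \eqref{e1.4n} with respect to the initial data and applying Gr\"onwall; and composition estimates in $H^s$ of the form $\|f\circ Y\|_{H^s}\le C(\|Y\|_{C^1},\|Y\|_{H^s})\|f\|_{H^s}$. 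The mollifier $\z_\d$ is the decisive technical device that allows the iteration to close.
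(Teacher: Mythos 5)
Your overall skeleton (Makino--Ukai symmetrization of the Euler block, a Kato-type fixed point with contraction in $L^\infty_tL^2_x$, the Gr\"onwall estimate for the difference of two Lagrangian maps, and the observation that the mollifier $\z_\d$ is what repairs the regularity imbalance in the momentum equations) agrees with the paper. But you are missing the paper's central structural idea, and without it your iteration has a genuine gap. The paper never puts the Dirac unknown $\uu$ inside the symmetric hyperbolic system at all: by Theorem~\ref{T:1.101} and Remark~\ref{R:1}, the quantities $|\uu|^2$ and $\uu^\dag\fkb\uu$ satisfy the \emph{free linear wave equation} in the Lagrangian variables $(t,y)$, with initial value and initial time derivative computable from $\uu_0$ alone via \eqref{e1.106''} and \eqref{e1.115''}. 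Hence $|\uu(t,y)|^2$ is a known smooth function before the fluid is solved, the source $\a\nabla_x(\z_\d*|\uu\circ Y^V|^2)$ is a functional of $V$ only (through $Y^V$), and the fixed point runs over the Euler variables alone; the Dirac equation is then solved afterwards as a \emph{linear} equation, since $\fkU$ and $\vre$ are by then known. This is also what makes the Lipschitz estimate \eqref{e1.129} clean: the difference of sources is controlled by $\|y^V-y^W\|_{L^2}$ because $|\uu(t,\cdot)|^2$ is a fixed smooth function being composed with two maps.

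The gap in your version is the coordinate mismatch. The Dirac equation lives in the Lagrangian coordinates $(t,y)$ with constant coefficient matrices $\fka_i$, while the Euler equations live in the Eulerian coordinates $(t,x)$; there is no single system $A_0(W)\po_tW+\sum_jA_j(W)\po_{x_j}W=G(t,x,W,W\circ Y)$ for $W=(\uu,w)$ in one set of variables. If you transport the Dirac operator to Eulerian coordinates its principal part acquires the factor $\po y/\po x$, which is not a pointwise function of $W$ but a nonlocal-in-time functional of $\ure$ (the solution of the flow ODE \eqref{e1.4n}); so the coupled system is not of Kato's quasilinear form, and the ``slight extension'' needed would be far more than adjusting the source term. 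Moreover your iteration would have to propagate $H^s$ bounds and $L^2$ contraction for the cubic Thirring nonlinearity under a changing Lagrangian map, none of which you estimate. All of this is exactly what the wave-equation reduction of Theorem~\ref{T:1.101} is designed to avoid; as written, your proposal does not close without it.
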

 
 \begin{proof} The proof proceeds through the following arguments. We use the symmetrization found in \cite{MU2} in order to transform the 
 left-hand side of \eqref{e1.123r}  into the form 
 \begin{equation}\label{e1.127}
 \LL(V)=a_0(V)\po_t V(t)+\sum_{j=1}^3 a_j(V)\po_{x_j}V(t),
 \end{equation}
 where $a_0$, $a_j$, $j=1,2,3$,  are $4\X4$ symmetric matrices, with $a_0$ positive definite. The right-hand side of \eqref{e1.123r} can be seen as a functional  
 ${\mathcal R}(V)$ on $C([0,T];H^s(\R^3;\R^4))$ into itself in the following way.  
 Given
 $V\in C([0,T];H^s(\R^3;\R^4))$, we may obtain the corresponding $(\rho^V, u^V)$ and then the corresponding $\ure^V$, from which we can define the Lagrangian transformation $Y^V$ as in Section~\ref{S:2}. Then, we obtain $|\uu|^2$ in the coordinates $(t,y)$ using Theorem~\ref{T:1.101} and the Remark~\ref{R:1}, solving the wave equation with initial data $|\uu_0|^2$ and \eqref{e1.115''}. This way we obtain $|\uu\circ Y^V|^2$, and so $\z_\d*|\uu\circ Y^V|^2$, and we define
 $$
 {\mathcal R}[V](t)=\left(\begin{matrix} 0\\  \nabla_x\z_\d*|\uu\circ Y^V(t)|^2\end{matrix}\right).
 $$
 By the regularity properties of the Lagrangian transformation obtained, e.g., in \cite{FPZ,FMN}, $\cR$ maps $C([0,T];H^s(\R^3;\R^4))$ into itself. 
   So we arrive at the $4\X4$ symmetric system
  \begin{equation}\label{e1.128}
  a_0(V)\po_t V(t)+\sum_{j=1}^3 a_j(V)\po_{x_j}V(t)={\mathcal R}[V](t).  
 \end{equation}
 This system does not entirely fall into the form of (Q') in \cite{K}, since the right-hand side is not of the form $F(t)[u(t)]$ as the right-hand side of (Q') in \cite{K}, which is defined, for each $t\in[0,T]$, as a function on a subset $D$ of $H^s(\R^3; P)$  where $P=\R^4$ here. Nevertheless, the fixed point argument in \cite{K} may be easily adapted to the present situation by the  following reason. As in \cite{K}, let us consider the map $\Phi$ defined therein in p.198, over which the fixed point argument is carried out. Following the same arguments in \cite{K}, we can prove, by arguments similar to those employed in Lemma~4.4 of \cite{K},  that  $\Phi$ maps $S$ into itself, if $L'$ and $T'$ are appropriately chosen, where $S$ is the set of functions from $[0,T']$ to $H^s(\R^3;\R^4)$, Lipschitz continuous in time in the $H^{s-1}$-norm, with Lipschitz constant $L'$,   defined in p.195 of \cite{K}.  Having that at hand we may conclude the fixed point argument as in \cite{K}. The key difference here, as compared with \cite{K}, lies in the proof of the estimate $\|f^v(t)-f^w(t)\|\le\mu\,d(v,w)$ in Lemma~4.5, p.198 of \cite{K},  which here would translate into
 \begin{equation}\label{e1.129}
  \|\cR[V](t)-\cR[W](t)\|_{L^2(\R^3)}\le C \sup_{t\in[0,T]}\|V(t)- W(t)\|_{L^2(\R^3)},
 \end{equation}
 for some constant $C>0$. To prove \eqref{e1.129} we first note that, by Young's inequality for convolutions, the left-hand side is bounded by
 $$
 C\||\uu(t,y^V(t,\cdot))|^2-|\uu(t,y^W(t,\cdot)|^2\|_{L^2(\R^3)}
 $$
 for some constant $C>0$ which may vary along this proof. Now, since $|\uu(t,y)|^2$ is the solution of the wave equation with smooth initial data by Theorem~\ref{T:1.101}, and Remark~\ref{R:1}, the above quantity is bounded by
 $$
 C\|y^V(t,\cdot)-y^W(t,\cdot)\|_{L^2(\R^3)}.
 $$
 Now, we argue as in \cite{FMN}, p.148. We have, from the definition of the Lagrangian coordinate,
 \begin{align*}
 &\po_ty^V(t,x)+\ure^V(t,x)\cdot\nabla_xy^V(t,x)=0,\\
 &y^V(0,x):=(x_1,x_2,\int_0^{x_3} \rhore(0,x_1,x_2,\s)\,d\s)
\end{align*}
 and a similar equation holds for $y^W(t,x)$ with the same initial condition, which does not depend on $V$ or $W$. Therefore, denoting, as in \cite{FMN}, $\tilde y(t,x)=y^V(t,x)-y^W(t,x)$, we obtain that $\tilde y(t,x)$ satifies
 $$
 \begin{cases} 
 \tilde y_t=-(\ure^V-\ure^W)\cdot\nabla_x y^V-\ure^W\cdot\nabla_x\tilde y,\\
 \tilde y(0,x)=0.
 \end{cases}
 $$
Multiplying by $\tilde y$ and integrating by parts we obtain (as in \cite{FMN}) we obtain
$$
\frac{d}{dt}\int_{\R^3}|\tilde y|^2\,dx\le C(\|\nabla y^V\|_\infty^2\|\ure^V-\ure^W\|_{L^2(\R^3)}+
\|\div \ure^W\|_\infty^2\|\tilde y\|_{L^2(\R^3)}^2),
$$
and then, using the boundedness of $\|\nabla_x y^V\|_\infty$ (see, e.g., (2.30) in \cite{FMN}, where here $\ure$ plays the role of $u$ therein), and the time integrability of $\|\div \ure^W\|_\infty$, by Gronwall's inequality, we obtain 
$$
\|\tilde y\|_{L^2(\R^3)}^2\le C\sup_{t\in[0,T]}\|\ure^V(t)-\ure^W(t)\|_{L^2(\R^3)},
$$ 
 from which, using the fact that $(\rhore, \ure)\mapsto(\rho,u)$ and $(\rho,u)\mapsto V$ are local bi-Lipschitz diffeomorphisms, \eqref{e1.129} follows. Thus, we prove as in Lemma~4.5 of \cite{K} that the map which here corresponds to $\Phi$ therein is a contraction map  of $S$ endowed with the metric
 $$
 d(V,W)=\sup_{0\le t\le T'}\|V(t)-W(t)\|_{L^2(\R^3)}
 $$
 if $T'$ is sufficiently small.
  
 Once we have obtained the solution of \eqref{e1.128}, from $V(t)$ and the relativistic Lagrangian transformation $Y$ we get $\vre(t,y)$ and from Theorem~\ref{T:1.101} we also
 obtain $\fkU$ from the initial data. Therefore, the Dirac equation in \eqref{e1.123r} 
 reduces to a linear equation with a right-hand side of the form $-i\fkB(t,y)\uu$, with $\fkB(t,y)$ known, satisfying the hypotheses of Theorem~\ref{T:1.101}, whose solution is standard. 
 
 Uniqueness follows first from the uniqueness of the fixed point of contractions; second from the uniqueness of the smooth solution  of the Cauchy problem for a linear Dirac equation. This completes the proof.  
 
 \end{proof}
  
  \begin{remark}\label{R:final} It can be seen, by checking the arguments above and in \cite{K}, that when $\d\sim \a \to0$, the local in time solution given by Theorem~\ref{T:main} converges to the local in time smooth solution of the relativistic Euler equation obtained in \cite{MU2}, together with a smooth solution of the $1+3$-dimensional Thirring model nonlinear Dirac equation with a potential proportional to the relativistic specific volume. It is very interesting the fact that in this limit system short waves and long waves are still coupled, not only through the Lagrangian coordinates, but also  through the relativistic specific volume in the Dirac equation!  
  \end{remark}

\end{document}